\newtheorem*{Def}{Definition}
\newtheorem{Th}{Theorem}
\newtheorem{Le}[Th]{Lemma}
\newtheorem{Rem}[Th]{Remark}
\numberwithin{equation}{section}
\newcommand{\R}{\mathbb{R}}
\newcommand{\fdot}{\,\cdot\,}
\newcommand{\tb}{\tilde{b}}
\newcommand{\tI}{\tilde{\I}}
\newcommand{\IA}{\I^{\mathcal{A}}}
\DeclareMathOperator{\I}{I}
\DeclareMathOperator{\J}{J}
\newcommand{\A}{\mathcal{A}}
\newcommand{\F}{\mathcal{F}}
\newcommand{\E}{\mathbb{E}}
\renewcommand{\leq}{\leqslant}
\renewcommand{\geq}{\geqslant}
\newcommand{\eq}[1]{\begin{equation}{#1}\end{equation}}
\newcommand{\alg}[1]{\begin{align}{#1}\end{align}}
\newcommand{\set}[2]{\{{#1}\mid{#2}\}}
\def\kratno{\mathrel{\smash{\lower.5ex\hbox{$\vdots$}}}}
\title{Fractional integration for irregular martingales\thanks{Supported by the Russian Science Foundation grant N. 19-71-10023.}}
\author{Dmitriy Stolyarov \and Dmitry Yarcev}
\begin{document}
\maketitle
\begin{abstract}
We suggest two versions of the Hardy--Littlewood--Sobolev inequality for discrete time martingales. In one version, the fractional integration operator is a martingale transform, however, it may vanish if the filtration is excessively irregular; the second version lacks the martingale property while being analytically meaningful for an arbitrary filtration.
\end{abstract}

\section{Martingale fractional integration}
The classical Hardy--Littlewood--Sobolev inequality plays an important role in analysis, see e.g. Chapter~$5$ in~\cite{Stein1970}. It says that the Riesz potential of order~$\alpha$, i.e. the Fourier multiplier with the symbol~$|\fdot|^{-\alpha}$, maps~$L_p(\R^d)$ to~$L_q(\R^d)$ provided~$\frac{1}{p} - \frac{1}{q} = \frac{\alpha}{d}$ and~$1 < p < q < \infty$. The Riesz potential is often referred to as the fractional integration operator. As many other objects in Harmonic Analysis, the fractional integration has probabilistic interpretation. In~\cite{Watari1964}, Watari transferred this notion to dyadic martingales and proved the corresponding version of the Hardy--Littlewood--Sobolev inequality; see also~\cite{ChaoOmbe1985} and~\cite{Cruz-UribeMoen2013}  for related results. In~\cite{NakaiSadasue2012}, these ideas were generalized to the setting of regular filtrations and martingales. 

Consider the atomless probability space~$(\Omega,\Sigma,P)$ equipped with the filtration~$\F = \{\F_n\}_{n=0}^{\infty}$. Assume that each~$\sigma$-algebra~$\F_n$ is generated by at most countable number of atoms and denote by~$\A(\F_n)$ the set of atoms of~$\F_n$. We further assume that the filtration~$\F$ separates the points of~$\Omega$ in the sense that~$\cup_n \F_n$ generates~$\Sigma$. The filtration~$\F$ is called regular provided there exists~$\rho > 0$ such that any atoms~$w\in \A(\F_n)$ and~$v \in \A(\F_{n+1})$ such that~$v \subset w$ satisfy~$P(v) \geq \rho P(w)$. This is the same as to say that the inequality~$\rho F_{n+1}\leq F_n$ is true for any non-negative martingale~$F$ adapted to~$\F$. We denote the martingale differences of~$F$ by~$\Delta F_n$, i.e.~$\Delta F_n = F_n - F_{n-1}$ whenever~$n \geq 1$. It is also convenient to introduce auxiliary functions~$b_n$:
\eq{\label{bn}
b_n =  \sum\limits_{w\in \A(\F_n)} P(w)\chi_{w}.
}
Consider the operator~$\I_{\alpha}$,~$\alpha \in (0,1)$, acting on martingales by the rule
\eq{\label{NakaiSadasueOperator}
\I_{\alpha}[F] = \sum\limits_{n=1}^{\infty} b_{n-1}^\alpha \Delta F_n.
}
Note that this operator is a martingale transform in the sense that if we truncate the summation and consider the sequence
\eq{
\I_{\alpha}[F]_N =  \sum\limits_{n=1}^{N} b_{n-1}^\alpha \Delta F_n,
}
then this sequence forms a martingale adapted to~$\F$. Nakai and Sadasue in~\cite{NakaiSadasue2012} proved that~$\I_{\alpha}$ is~$L_p\to L_q$ continuous provided~$\frac{1}{p} - \frac{1}{q} = \alpha$,~$1 < p < q<\infty$, and~$\F$ is a regular filtration (see Theorem~$5.1$ in~\cite{NakaiSadasue2012}). As we have already said, the dyadic case (i.e. when each atom in~$\A(\F_n)$ is split into two atoms of equal probability in~$\A(\F_{n+1})$) had been already considered by Watari in~\cite{Watari1964}; see~\cite{ApplebaumBanuelos2014} how to link rigorously the dyadic martingale Hardy--Littlewood--Sobolev inequality to the classical Euclidean version, that paper also contains information on generalization to the setting of semigroups of operators.  

It is easy to see by applying~$\I_{\alpha}$ to single step martingales (i.e. martingales for which only one martingale difference~$\Delta F_n$ is non-zero) that without the regularity assumption the operator~$\I_{\alpha}$ may not be continuous as an~$L_p \to L_q$ operator (see Remark~\ref{Sharpness} below). On the other hand, the papers~\cite{BanuelosOsekowski2017} and~\cite{Osekowski2014} provide results about the sharp constants in weak-type inequalities for operators similar to~$\I_{\alpha}$ on uniform filtrations (i.e. each atom in~$\A(\F_n)$ is split into~$m$ atoms of equal probability in~$\A(\F_{n+1})$; here~$m$ is independent of~$w$ and~$n$); the corresponding constants appear to be uniformly bounded with respect to~$m$. This hints there must be a reasonable (i.e.~$L_p \to L_q$ continuous) generalization of the operator~\eqref{NakaiSadasueOperator} to the setting of irregular filtrations. 

Consider the modified functions~\eqref{bn}:
\eq{
\tb_n(x) = \inf \set{b_n(y)}{y \in \omega},\quad x\in \omega \in \A(\F_{n-1}).
}
Note that~$\tb_n$ is~$\F_{n-1}$-measurable and for regular filtrations~$\tb_n$ is comparable to~$b_{n-1}$. Thus, the operator
\eq{\label{tI}
\tI_\alpha[F] = \sum\limits_{n=1}^{\infty} \tb_{n}^\alpha \Delta F_n
}
generalizes~\eqref{NakaiSadasueOperator}. Note that this operator is also a martingale transform. Unfortunately~$\tI_{\alpha}$ vanishes if~$w\in \A(\F_n)$ contains infinitely many atoms of~$\F_{n+1}$. The operator
\eq{\label{IA}
\IA_\alpha[F] = \sum\limits_{n=1}^{\infty} b_{n}^\alpha \Delta F_n
}
is no longer a martingale transform; this operator is more interesting from the analytic point of view.  In the case where~$\F$ is uniform, the three operators~$\I_{\alpha}$,~$\tI_{\alpha}$, and~$\IA_{\alpha}$ are multiples of each other.
\begin{Th}\label{IrregularHLS}
Let~$\alpha \in (0,1)$\textup,~$1< p < q < \infty$\textup, and~$\frac{1}{p} - \frac{1}{q} = \alpha$. The operators~$\tI_{\alpha}$ and~$\IA_{\alpha}$ map~$L_p$ martingales to~$L_q$ continuously.
\end{Th}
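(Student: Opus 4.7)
The plan is to reduce, by linearity and $f = f^+ - f^-$, to the case of nonnegative $f$ with $\|f\|_p = 1$, and then to establish a pointwise Hedberg-type estimate
\[
|\IA_\alpha[F](x)| \;\leq\; C\, Mf(x)^{p/q}\,\|f\|_p^{1-p/q},
\]
where $Mf = \sup_n F_n$ is Doob's maximal function, together with an analogous bound for $\tI_\alpha$. Integration combined with Doob's inequality $\|Mf\|_p \leq c_p\|f\|_p$ then yields the theorem.

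For $\IA_\alpha$, which is not a martingale transform, the first step is summation by parts. The sequence $n\mapsto b_n(x)$ is pointwise nonincreasing (atoms of $\F_n$ refine those of $\F_{n-1}$) and $b_N \to 0$ almost surely because the filtration separates the points of the atomless space $\Omega$; together with $F_N\to f$ a.s.\ this yields $b_N^\alpha F_N \to 0$ a.s., and Abel summation gives
\[
\IA_\alpha[F] \;=\; -b_0^\alpha F_0 \;+\; \sum_{n=1}^{\infty}(b_{n-1}^\alpha - b_n^\alpha)\,F_{n-1},
\]
a series with nonnegative terms when $F\geq 0$. To obtain the pointwise bound I would split the sum at $N^* = N^*(x) = \min\{n : b_n(x) \leq (\|f\|_p/Mf(x))^p\}$: for $n \leq N^*$ use the H\"older estimate $F_{n-1}(x) \leq b_{n-1}(x)^{-1/p}\|f\|_p$, and for $n > N^*$ use the trivial bound $F_{n-1}(x) \leq Mf(x)$.

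The crucial summation lemma, valid for an arbitrary filtration, is the Stieltjes-integral comparison
\[
\sum_{n=1}^{N}(b_{n-1}^\alpha - b_n^\alpha)\, b_{n-1}^{-1/p} \;\leq\; \int_{b_N}^{1}\!\alpha\,t^{\alpha - 1 - 1/p}\,dt \;=\; \alpha q\,(b_N^{-1/q}-1).
\]
It is proved by writing $b_{n-1}^\alpha - b_n^\alpha = \int_{b_n}^{b_{n-1}}\alpha t^{\alpha-1}\,dt$ and using that $t\mapsto t^{-1/p}$ is decreasing on $(0,1]$, so $b_{n-1}^{-1/p} \leq t^{-1/p}$ on $[b_n,b_{n-1}]$. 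Combined with the telescope $\sum_{n>N^*}(b_{n-1}^\alpha - b_n^\alpha) \leq b_{N^*}^\alpha$ and the definition of $N^*$, this yields the Hedberg bound for $\IA_\alpha$.

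For $\tI_\alpha$ I would exploit the decomposition $\tI_\alpha[F] = \IA_\alpha[F] - \sum_n \phi_n\,\Delta F_n$ with $\phi_n = b_n^\alpha - \tb_n^\alpha \geq 0$, $\F_n$-measurable, bounded by $b_n^\alpha$, and vanishing on the smallest child of each $w\in\A(\F_{n-1})$; a parallel Abel/Stieltjes analysis of the correction---with $\phi_n$ in place of $b_n^\alpha$---gives the same Hedberg bound and hence the theorem. The hard part will be the summation lemma itself: without regularity the weights $b_n(x)$ may decrease arbitrarily slowly, so geometric-series estimates fail; the Stieltjes-integral comparison circumvents this by encoding the whole sum as an integral against the monotone function $t^\alpha$, producing a bound depending only on the terminal scale $b_N$---precisely the feature that breaks down for the Nakai--Sadasue operator $I_\alpha$ on irregular filtrations and that the modified operators $\tI_\alpha$ and $\IA_\alpha$ are designed to recover.
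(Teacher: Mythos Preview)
Your route is genuinely different from the paper's. The authors prove two endpoint bounds, $\|\IA_\alpha F\|_{L_{q,\infty}}\lesssim\|F\|_{L_1}$ and (by duality) $\|\IA_\alpha F\|_{L_\infty}\lesssim\|F\|_{L_{p,1}}$, by testing on atomic functions $F=P(w)^{-1}\chi_w$ and then interpolate; your plan is instead the classical Hedberg pointwise inequality $|\IA_\alpha F|\lesssim (Mf)^{p/q}\|f\|_p^{1-p/q}$ followed by Doob's maximal theorem. Your argument is more direct and, as a bonus, yields the maximal inequality $\|\sup_N |\IA_\alpha[F]_N|\|_{L_q}\lesssim\|f\|_{L_p}$ that the paper explicitly leaves open, since each Abel partial sum is dominated by the same Hedberg majorant. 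The paper's approach, on the other hand, gives sharper endpoint information (the $L_1\to L_{q,\infty}$ and $L_{p,1}\to L_\infty$ bounds themselves).

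There is one slip in the $\IA_\alpha$ argument. Your summation lemma applied with $N=N^*$ gives the head bound $\alpha q\,(b_{N^*}^{-1/q}-1)$, but the definition of $N^*$ only says $b_{N^*}\le (Mf)^{-p}$, which pushes $b_{N^*}^{-1/q}$ the \emph{wrong} way: on an irregular filtration $b_{N^*}$ may be vastly smaller than the threshold. The fix is immediate: stop the Stieltjes integral at $b_{N^*-1}>(Mf)^{-p}$, giving a head bound $\lesssim b_{N^*-1}^{-1/q}<(Mf)^{p/q}$, and treat the single crossover term $n=N^*$ by the crude estimate $(b_{N^*-1}^\alpha-b_{N^*}^\alpha)\,b_{N^*-1}^{-1/p}\le b_{N^*-1}^{-1/q}<(Mf)^{p/q}$.

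The $\tI_\alpha$ sketch has a real gap. The correction multipliers $\phi_n=b_n^\alpha-\tb_n^\alpha$ are nonnegative but \emph{not monotone} in $n$ (because $\tb_n$ depends on the smallest sibling at generation $n$, which bears no relation to the smallest sibling at generation $n+1$), so Abel summation of $\sum_n\phi_n\Delta F_n$ does not produce a series of nonnegative terms and the ``parallel Abel/Stieltjes analysis'' does not go through; bounding $|\phi_n-\phi_{n+1}|$ by $b_n^\alpha+b_{n+1}^\alpha$ leads to a divergent majorant when the $b_n$ decrease slowly. You will need a different idea here. One clean option: observe that $\tI_\alpha$ is a \emph{predictable} martingale transform (each $\tb_n$ is $\F_{n-1}$-measurable) and hence formally self-adjoint, so a single endpoint $\|\tI_\alpha F\|_{L_{q,\infty}}\lesssim\|F\|_{L_1}$ suffices, by duality and interpolation; that endpoint follows from $\tb_n\le b_n$ and your pointwise estimate for $\IA_\alpha$ on atomic inputs, exactly as the paper hints (``the proof for $\tI_\alpha$ follows the same steps with several shortcuts possible'').
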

\begin{Rem}\label{Sharpness}
The theorem above is sharp in the following sense. Let~$w\in \A(\F_n)$ and~$v\in\A(\F_{n+1})$ be such that~$v\subset w$. Consider the martingale~$F$ given by the rule
\eq{
F_{n+1}(x) = \begin{cases} \frac{1}{P(v)},\quad &x\in v;\\
-\frac{1}{P(w) - P(v)},\quad &x\in w\setminus v;\\
0,\quad &x\notin w,
\end{cases}
}
with~$F_m = 0$ when~$m < n+1$ and~$F_m = F_{n+1}$ when~$m \geq n+1$. The quantities~$\|\IA_\alpha[F]\|_{L_q}$ and~$\|F\|_{L_p}$ are comparable provided~$P(v) \leq \frac12P(w)$.
\end{Rem}
The purpose of this note is to prove Theorem~\ref{IrregularHLS}. We will provide a detailed proof for the operator~$\IA_\alpha$, the proof for~$\tI_\alpha$ follows the same steps with several shortcuts possible. The proofs in~\cite{NakaiSadasue2012} and~\cite{Watari1964} establish an \emph{a priori} stronger inequality
\eq{
\|\I_{\alpha}[F]^*\|_{L_q} \lesssim \|F\|_{L_p},
}
here the star sign means the martingale maximal function:~$G^*(w) = \sup_n |G_n(w)|$; the notation~$A \lesssim B$ is short for~$A\leq CB$ where the constant~$C$ is uniform with respect to the parameters that are clear from the context. We do not know whether a similar maximal inequality is true for the operator~$\IA_\alpha$ (it does not follow from Theorem~\ref{IrregularHLS} since~$\IA_\alpha$ is not a martingale transform). Our proof will go as follows. We will establish the endpoint inequalities
\alg{
\label{WeakOne}\|\IA_\alpha[F]\|_{L_{q,\infty}} \lesssim \|F\|_{L_1},\quad q = \frac{1}{1-\alpha};\\
\label{WeakTwo}\|\IA_\alpha [F]\|_{L_{\infty}} \lesssim \|F\|_{L_{p,1}},\quad p = \frac{1}{\alpha}
}
and then interpolate. This way of proving the Hardy--Littlewood--Sobolev inequality is related to O'Neil's inequality, see p. 38 in~\cite{Peetre1976}.

Before we pass to details, we briefly describe the motivation. The Hardy--Littlewood--Sobolev inequality was designed by Sobolev in~\cite{Sobolev1938} as an instrument to prove what is now called the Sobolev embedding theorem. In the last two decades, there was a strong interest in the so-called Bourgain--Brezis inequalities (originated in~\cite{BourgainBrezis2002}). These inequalities somehow extend the Sobolev embedding theorem in the limit case~$p=1$ to the setting of more complicated differential operators (we refer the reader to the survey~\cite{VanSchaftingen2014} for more information); there are still many open questions in this field. In~\cite{ASW2018}, a probabilistic model for Bourgain--Brezis inequalities was suggested; it appeared that, in a sense, these inequalities naturally extend Watari's theorem (for uniform not necessarily dyadic filtrations) to the limit case~$p=1$ by imposing linear constraints on each step of the martingale (see~\cite{Stolyarov2019} as well). It is now natural to try to find the way back from discrete uniform martingales to the classical Euclidean setting, and the extension to the class of arbitrary martingales seems desirable. We have not found such an extension for a simpler phenomenon (that is the Hardy--Littlewood--Sobolev inequality) in the literature, and hope that the present paper fills this gap; it might be thought of as the first step towards the theory of Bourgain--Brezis inequalities for irregular martingales.   

We are grateful to Adam Osekowski, Pavel Zatitskii, and Ilya Zlotnikov for attention to our work.   

\section{Functional analysis preparation}
Let us first comment on our use of the~$L_p$ norms. The scalars in our considerations are always real, however, the reasonings work for the complex case as well. We understand the operator~$\IA_\alpha$ in the following sense: one takes a summable random variable~$F=F_\infty$, constructs the martingale~$F$ (which we denote by the same letter) by the formula
\eq{
F_n  = \E(F_{\infty}\mid \F_n),
}
then computes the sum~\eqref{IA}, and obtains the random variable~$\IA_\alpha[F]$. So, we consider our operators as linear mappings between function spaces. The classical martingale mappings do not differ from these when~$p > 1$ (by Doob's convergence theorem). Our standpoint allows to verify the boundedness of linear operators on simple functions (by a simple function we mean a random variable that is~$\F_n$-measurable for some large~$n$) and martingales since any summable random variable allows approximation in~$L_1$ by simple random variables. One may derive the almost sure convergence of the series~\eqref{tI} and~\eqref{IA} and corresponding inequalities for arbitrary~$L_p$ martingales~$F$ from Theorem~\ref{IrregularHLS} by a routine limiting argument.

We will be using Lorentz spaces; we refer the reader to~\cite{Grafakos2008Classical} for a detailed exposition. A Lorentz space~$L_{p,q}, 1 \leq p < \infty$, is the space of measurable random variables~$f$ such that the quasi-norm
\eq{\label{Lorentz}
\|f\|_{L_{p,q}} = \Big\|t\big(P(|f| > t)\big)^\frac{1}{p}\Big\|_{L_q(\mathbb{R_+}, dt/t)}
}
is finite (here $dt$ is the Lebesgue measure on the line). The quasi-norm above may violate the triangle inequality if~$q\ne p$, however, the Lorentz space~$L_{p,q}$ may be equipped with an equivalent norm when~$p > 1$. In particular, it may be treated as a Banach space. We will use this fact several times referring to it as "Lorentz spaces are normable". 

\begin{Le}\label{DualityLorentz}
If~$q \in (1,\infty)$\textup, then~$(L_{q, \infty})^* \supset L_{q', 1}$\textup, where~$q' = \frac{q}{q-1}$ is the conjugate exponent to $q$. 
\end{Le}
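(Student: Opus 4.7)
The plan is to exhibit the embedding concretely: to every $g\in L_{q',1}$ associate the linear functional
$$\Lambda_g(f)=\int_\Omega fg\,dP,\qquad f\in L_{q,\infty},$$
and show $\|\Lambda_g\|_{(L_{q,\infty})^*}\lesssim\|g\|_{L_{q',1}}$. The substance is the Hölder-type estimate
$$\int_\Omega |fg|\,dP\lesssim \|f\|_{L_{q,\infty}}\|g\|_{L_{q',1}},$$
which is a classical inequality in Lorentz spaces. Everything is phrased in terms of decreasing rearrangements $f^*,g^*$ on $(0,\infty)$.

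The key steps, in order, are as follows. First I would invoke the Hardy--Littlewood rearrangement inequality
$$\int_\Omega |fg|\,dP\leq \int_0^\infty f^*(t)g^*(t)\,dt,$$
which reduces the question to a one-dimensional estimate on $(0,\infty)$. Second, I would use the identity $\|f\|_{L_{q,\infty}}=\sup_{t>0} t^{1/q}f^*(t)$, so that $f^*(t)\leq \|f\|_{L_{q,\infty}}t^{-1/q}$ pointwise. Substituting and using $1/q+1/q'=1$ gives
$$\int_0^\infty f^*(t)g^*(t)\,dt\leq \|f\|_{L_{q,\infty}}\int_0^\infty t^{1/q'}g^*(t)\,\frac{dt}{t},$$
and the right factor is, up to the constant relating the quasi-norm $\|\fdot\|_{L_{q',1}}$ to the norm provided by normability, exactly $\|g\|_{L_{q',1}}$. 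Combining these two inequalities yields
$$|\Lambda_g(f)|\leq \int_\Omega |fg|\,dP\lesssim \|f\|_{L_{q,\infty}}\|g\|_{L_{q',1}},$$
so $\Lambda_g\in(L_{q,\infty})^*$ with $\|\Lambda_g\|\lesssim \|g\|_{L_{q',1}}$. Linearity of $g\mapsto\Lambda_g$ and the fact that $\Lambda_g=0$ forces $g=0$ (by testing against indicator functions of level sets of $g$) give the required embedding $L_{q',1}\hookrightarrow(L_{q,\infty})^*$.

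There is no real obstacle: the only delicate point is that $\|\fdot\|_{L_{q,\infty}}$ and $\|\fdot\|_{L_{q',1}}$ as defined in~\eqref{Lorentz} are quasi-norms, not norms, so the bounded linear functional $\Lambda_g$ must be interpreted with respect to an equivalent norm on $L_{q,\infty}$ (which exists since $q>1$, as noted in the paragraph on normability). This costs only a multiplicative constant and does not affect the statement, which is an inclusion, not an isometric identification.
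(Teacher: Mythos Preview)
Your proof is correct and takes a genuinely different route from the paper's. The paper does not prove the H\"older-type inequality directly; instead it simply cites the duality $L_{q',1}^* = L_{q,\infty}$ (Theorem~1.4.17 in~\cite{Grafakos2008Classical}) and then observes that the canonical embedding of a Banach space into its bidual gives $L_{q',1}\hookrightarrow L_{q',1}^{**}=(L_{q,\infty})^*$, using normability to make sense of the bidual. Your argument is more elementary and self-contained: you prove the pairing estimate $\int|fg|\,dP\lesssim\|f\|_{L_{q,\infty}}\|g\|_{L_{q',1}}$ from scratch via Hardy--Littlewood rearrangement and the pointwise bound $f^*(t)\leq\|f\|_{L_{q,\infty}}t^{-1/q}$. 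This is essentially a special case of O'Neil's inequality, and it has the advantage of not relying on the (harder) identification of $L_{q',1}^*$; the paper's approach is shorter but outsources the work to a cited theorem. One small remark: the constant you mention is not the normability constant but rather the factor (equal to~$q'$) relating the distribution-function form of $\|\fdot\|_{L_{q',1}}$ used in~\eqref{Lorentz} to the rearrangement form $\int_0^\infty t^{1/q'}g^*(t)\,dt/t$ that you actually arrive at; this is harmless for the argument.
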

The lemma states that each function~$f \in L_{q', 1}$ defines a continuous linear functional on $L_{q, \infty}$ according to the standard formula $g\mapsto \E fg$.  The lemma is a consequence of the formula
\eq{
L_{q', 1}^* = L_{q, \infty},\quad q > 1,
}
(see Theorem~$1.4.17$ in~\cite{Grafakos2008Classical}) and the fact that the second dual contains the Banach space itself (here we use that Lorentz spaces are normable). The complete description of the dual space~$(L_{q, \infty})^*$ is given in~\cite{Cwikel1975}.

\begin{Le}\label{FirstWeakType}
Inequality~\eqref{WeakOne} is true.
\end{Le}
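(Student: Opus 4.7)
The plan is to recast $\IA_\alpha$ as an integral operator whose kernel is comparable to an $\alpha$-fractional power of an ultrametric distance on $\Omega$, and then to run the classical Hedberg/O'Neil argument in this ultrametric setting. Write $w_n(x)$ for the atom of $\F_n$ containing $x$, so $b_n(x) = P(w_n(x))$, and $K_n(x, y) = \chi_{\{y \in w_n(x)\}}/b_n(x)$, so that $F_n(x) = \int K_n(x, y) F(y)\,dP(y)$. Interchanging the sum in~\eqref{IA} with the integral and applying Abel summation (the boundary term $b_N(x)^\alpha K_N(x, y)$ vanishes in the limit, since $b_N(x) \to 0$ and since $K_N(x, y) = 0$ for large $N$ at almost every pair $x \ne y$) produces
\[
\IA_\alpha[F](x) = \int_\Omega K(x, y) F(y)\,dP(y), \qquad K(x, y) = -b_1(x)^\alpha + \sum_{n \ge 1}\bigl(b_n(x)^\alpha - b_{n+1}(x)^\alpha\bigr) K_n(x, y).
\]

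Next I would introduce the ultrametric $\rho(x, y) = P(w)$, where $w$ is the smallest atom of $\bigcup_n \A(\F_n)$ containing both $x$ and $y$; the tree structure of the atoms makes $\rho$ an honest ultrametric, and the $\rho$-ball $B_r(x) = \{y : \rho(x, y) \le r\}$ is simply $w_{N(x, r)}(x)$ with $N(x, r) = \inf\{n : b_n(x) \le r\}$, so in particular $P(B_r(x)) \le r$. Only indices $n < n^*(x, y) := \inf\{n : w_n(x) \ne w_n(y)\}$ contribute to $K(x, y)$, so
\[
|K(x, y)| \le b_1(x)^\alpha + \sum_{n=1}^{n^*(x, y) - 1} \frac{b_n(x)^\alpha - b_{n+1}(x)^\alpha}{b_n(x)}.
\]
I would group these indices by the dyadic scale $b_n(x) \in [2^{-k-1}, 2^{-k})$, which is a contiguous range of $n$ because $b_n(x)$ is non-increasing; the telescoping $\sum (b_n^\alpha - b_{n+1}^\alpha) \le 2^{-k\alpha}$ within each scale combined with $1/b_n \le 2^{k+1}$ bounds each scale by $2^{k(1-\alpha)}$, and summing a geometric series in $k$ up to $k \approx \log_2(1/\rho(x, y))$ yields the pointwise kernel estimate $|K(x, y)| \lesssim \rho(x, y)^{\alpha - 1}$.

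Finally, since $|\IA_\alpha[F](x)| \le \int_\Omega |F(y)| \rho(x, y)^{\alpha - 1}\,dP(y)$, the layer-cake identity $t^{\alpha - 1} = (1 - \alpha)\int_0^\infty s^{\alpha - 2}\chi_{\{s \ge t\}}\,ds$ together with the two obvious estimates $\int_{B_s(x)} |F|\,dP \le \min\{\|F\|_1,\; s \cdot M|F|(x)\}$, where $M|F|(x) = \sup_n \E[|F| \mid \F_n](x)$ is Doob's maximal function, gives after optimization in $s$ (at $s = \|F\|_1/M|F|(x)$) the Hedberg-type pointwise bound
\[
|\IA_\alpha[F](x)| \lesssim \|F\|_1^\alpha \bigl(M|F|(x)\bigr)^{1 - \alpha}.
\]
Doob's weak $(1,1)$ inequality $P(M|F| > t) \le \|F\|_1/t$ applied at $t = (\lambda/\|F\|_1^\alpha)^{1/(1-\alpha)}$ then yields~\eqref{WeakOne}. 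I expect the kernel bound in the second paragraph to be the main technical obstacle: the irregularity of the filtration forbids a single-scale telescoping, and it is the combination of the monotonicity of $n \mapsto b_n(x)$ (which makes each dyadic scale contribute a contiguous block of indices that can be telescoped) with $\alpha < 1$ (which makes the resulting geometric series in $k$ converge with ratio $2^{1-\alpha}$ dominated by the outermost scale) that produces the correct $\rho^{\alpha - 1}$ growth. The manipulations above can first be carried out for simple $F$, as prescribed in the functional-analysis preparation section, and then extended by the routine limiting argument.
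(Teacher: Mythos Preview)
Your proof is correct and takes a genuinely different route from the paper's.

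The paper reduces to atomic test functions $F=r_N^{-1}\chi_{w_N}$ via Lemma~\ref{DeltaMeasure}, writes $\IA_\alpha[F]$ explicitly on the annuli $w_n\setminus w_{n+1}$, and bounds the resulting discrete sums by the integral $\int_{r_n}^{1}x^{\alpha-2}\,dx$ to obtain $|\IA_\alpha[F]|\lesssim r_n^{-1/q}$ on $w_n\setminus w_{n+1}$, from which the weak type follows at once. You instead exhibit a kernel representation with $|K(x,y)|\lesssim \rho(x,y)^{\alpha-1}$ for the natural ultrametric $\rho$, then run Hedberg's argument against Doob's maximal function. Your dyadic-scale telescoping (group indices by $b_n(x)\in[2^{-k-1},2^{-k})$, telescope to $\le 2^{-k\alpha}$, divide by $b_n\ge 2^{-k-1}$, sum the geometric series in $k$) is the discrete analogue of the paper's $\int x^{\alpha-2}dx$ comparison, and the reduction to simple $F$ justifies the Abel summation and sum--integral interchange cleanly.

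What your approach buys: the pointwise Hedberg bound $|\IA_\alpha[F]|\lesssim \|F\|_{L_1}^{\alpha}(M|F|)^{1-\alpha}$ is strictly stronger than the lemma and, combined with the $L_p$ boundedness of $M$ for $p>1$, yields the full $L_p\to L_q$ estimate of Theorem~\ref{IrregularHLS} directly, bypassing both Lemma~\ref{SecondWeakType} and the interpolation step. It also makes transparent the ultrametric geometry underlying $\IA_\alpha$ and its analogy with the Euclidean Riesz potential. What the paper's approach buys: it is shorter and entirely elementary for this particular lemma, requiring no kernel formalism or maximal function; the atomic reduction also dovetails with the paper's treatment of the dual endpoint~\eqref{WeakTwoDual}, where the same combinatorics on a single chain of atoms reappear.
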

\begin{Le}\label{SecondWeakType}
Inequality~\eqref{WeakTwo} is true.
\end{Le}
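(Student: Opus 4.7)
The plan is to establish the estimate pointwise. Fix $x \in \Omega$ and let $w_n = w_n(x)$ denote the atom of $\F_n$ containing $x$; set $t_n = P(w_n)$. Then $b_n(x) = t_n$, the sequence $\{t_n\}_{n \geq 0}$ is non-increasing, and $t_n \to 0$ since $\F$ separates points. The engine of the proof is Abel summation applied to the partial sum
\[
S_N(x) = \sum_{n=1}^N t_n^\alpha \bigl(F_n(x) - F_{n-1}(x)\bigr) = t_N^\alpha F_N(x) - t_1^\alpha F_0(x) + \sum_{n=1}^{N-1} (t_n^\alpha - t_{n+1}^\alpha) F_n(x),
\]
in which every coefficient $t_n^\alpha - t_{n+1}^\alpha$ is non-negative. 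The second ingredient is the elementary rearrangement bound $|F_n(x)| \leq \frac{1}{t_n}\int_0^{t_n} F^*(s)\,ds =: F^{**}(t_n)$, which converts each conditional expectation into a monotone function of the scale $t_n$.

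To estimate the main sum, I would combine $t_n^\alpha - t_{n+1}^\alpha = \int_{t_{n+1}}^{t_n} \alpha s^{\alpha-1}\,ds$ with the monotonicity of $F^{**}$ to obtain
\[
(t_n^\alpha - t_{n+1}^\alpha) |F_n(x)| \leq \int_{t_{n+1}}^{t_n} \alpha s^{\alpha-1} F^{**}(s)\,ds,
\]
so that the telescoping sum is dominated by $\int_0^{t_0} \alpha s^{\alpha-1} F^{**}(s)\,ds$. A Fubini computation then gives the Hardy-type identity $\int_0^\infty s^{\alpha-1} F^{**}(s)\,ds = (1-\alpha)^{-1}\int_0^\infty s^{\alpha-1} F^*(s)\,ds = (1-\alpha)^{-1}\|F\|_{L_{p,1}}$ with $p = 1/\alpha$, which closes the bound for the bulk term.

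Two boundary terms remain. The initial one is tame: $t_1^\alpha |F_0(x)| \leq t_0^\alpha F^{**}(t_0) \leq \int_0^{t_0} s^{\alpha-1} F^*(s)\,ds \leq \|F\|_{L_{p,1}}$. The step I expect to require the most care is showing $t_N^\alpha F_N(x) \to 0$ as $N \to \infty$. This will follow from $t_N^\alpha F^{**}(t_N) = t_N^{\alpha-1}\int_0^{t_N} F^*\,ds \leq \int_0^{t_N} s^{\alpha-1} F^*\,ds$ (using $\alpha - 1 < 0$ and the pointwise bound $s^{\alpha-1} \geq t_N^{\alpha-1}$ for $s \leq t_N$), combined with the separation of points $t_N \to 0$ and the absolute continuity of the finite integral $\|F\|_{L_{p,1}}$. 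Letting $N\to\infty$ then gives $|\IA_\alpha[F](x)| \lesssim \|F\|_{L_{p,1}}$ uniformly in $x$, which is \eqref{WeakTwo}.
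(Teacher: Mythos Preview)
Your argument is correct and proceeds by a genuinely different route from the paper. The paper dualizes: it computes the formal adjoint $(\IA_\alpha)^*$, reduces via Lemma~\ref{DeltaMeasure} to atomic inputs $G = r_N^{-1}\chi_{w_N}$, and then splits $(\IA_\alpha)^*$ into a part $\J^1$ acting at scales $n\leq N$ and a tail $\J^2$ acting at scales $n>N$, estimating each separately in $L_{q,\infty}$. You instead bound $\IA_\alpha[F](x)$ directly, combining Abel summation with the Hardy--Littlewood rearrangement inequality $|F_n(x)|\leq F^{**}(b_n(x))$, which collapses the problem to a one-dimensional integral controlled by $\int_0^\infty s^{\alpha-1}F^*(s)\,ds \simeq \|F\|_{L_{p,1}}$ via Hardy's identity. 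Both proofs ultimately exploit the same telescoping device $t_n^\alpha-t_{n+1}^\alpha=\int_{t_{n+1}}^{t_n}\alpha s^{\alpha-1}\,ds$, but your version sidesteps the adjoint computation and the atomic reduction, and delivers pointwise convergence of the series defining $\IA_\alpha[F]$ for general $F\in L_{p,1}$ as a byproduct; the paper's approach, in turn, keeps the proofs of~\eqref{WeakOne} and~\eqref{WeakTwoDual} structurally parallel by running both through the same $L_1\to L_{q,\infty}$ atomic machinery. One small caveat: the assertion $t_n\to 0$ holds only for almost every $x$ (it follows from atomlessness together with $\cup_n\F_n$ generating $\Sigma$), but that is all you need for an $L_\infty$ bound.
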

The proofs of the two lemmas above are presented in Section~\ref{S3} below. Theorem~\ref{IrregularHLS} follows from them by standard interpolation, e.g. by Theorem~$1.4.19$ in~\cite{Grafakos2008Classical}. We end this section with some "soft" functional analysis preparation to the proofs of the lemmas. First, we would like to dualize inequality~\eqref{WeakTwo}, and for that we need to compute the conjugate operator~$(\IA_\alpha)^*$. We do not care about domains of operators and convergence since we are allowed to work with simple functions and martingales. Let~$M_n$ be the operator of multiplication by~$b_n^\alpha$, let us also write~$E_n F = \mathbb{E}(F \mid \F_n)$ for brevity. The Riesz potential~$\IA_\alpha$ can be rewritten within new terms as
\eq{
\IA_\alpha[F] = \sum \limits_{n=1}^{\infty} M_n(E_n-E_{n-1}). 
}
Note that the operators~$M_n$ and~$E_n$,~$n=1,\ldots,\infty$, are self-adjoint. Therefore, the formal conjugate operator to $\IA_\alpha$ may be expressed as
\eq{\label{star} 
(\IA_\alpha)^* = \sum \limits_{n=1}^{\infty} (E_n-E_{n-1})M_n.
}
We understand this identity in the sense that the formula
\eq{\label{Conjugation}
\E \IA_\alpha[F] G = \E F(\IA_\alpha)^*[G]
}
is true provided both functions~$F$ and~$G$ are simple. There is a peculiarity here: the conjugate operator does not map simple functions to simple functions; however, as we will prove in the next section (see formulas~\eqref{J1},~\eqref{J2}, and~\eqref{Uniform}), it maps simple functions to bounded ones, which allows to work with the formula~\eqref{Conjugation}. Therefore, Lemma~\ref{SecondWeakType} is reduced to the inequality
\eq{\label{WeakTwoDual}
\|(\IA_{\alpha})^*[G]\|_{L_{q,\infty}} \lesssim \|G\|_{L_1},\quad q = \frac{1}{1-\alpha},
}
via Lemma~\ref{DualityLorentz}. 

Thus, we are left with proving the~$L_1\to L_{q,\infty}$ boundedness of two operators (namely,~$\IA_\alpha$ and~$(\IA_\alpha)^*$). We end our preparation with a simple lemma about operators from~$L_1$ to a Banach space, which is merely a manifestation of the principle that the extremal points in the unit ball of the space of measures are the delta measures.
\begin{Def} 
A function~$F$ is called atomic provided it is a scalar multiple of a characteristic function of an atom~$w\in\A(\F_n)$ for some~$n$.
\end{Def}
\begin{Le}\label{DeltaMeasure}
Let $T$ be a linear operator initially defined on the set of simple functions and mapping them to some Banach space $X$. Assume there exists~$c > 0$ such that for any atomic $F$ the inequality
\eq{
\|TF\|_X \leq c\|F\|_{L_1}
}
holds true. Then\textup, the same estimation  is valid for any simple function $F$.
\end{Le}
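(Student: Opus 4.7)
The plan is to reduce the estimate from simple functions to atomic ones by atomic decomposition and the triangle inequality in $X$. Since $F$ is $\F_N$-measurable for some $N$, it admits the expansion $F=\sum_{w\in\A(\F_N)}\lambda_w\chi_w$, where $\lambda_w$ is the constant value $F$ takes on the atom $w$, and $\|F\|_{L_1}=\sum_w|\lambda_w|P(w)$.

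For $F$ supported on finitely many atoms (which is the only case really needed later), the linearity of $T$ on simple functions gives $TF=\sum_w\lambda_w T\chi_w$, and the triangle inequality in $X$ combined with the atomic hypothesis yields at once
\eq{
\|TF\|_X\leq\sum_w|\lambda_w|\,\|T\chi_w\|_X\leq c\sum_w|\lambda_w|P(w)=c\|F\|_{L_1}.
}
For $F$ with countably infinite atomic support, I would enumerate the non-zero atoms, form finite truncations $F^{(K)}=\sum_{w\in S_K}\lambda_w\chi_w$ converging to $F$ in $L_1$, and apply the finite estimate above to the differences $F^{(L)}-F^{(K)}$ to conclude that $(TF^{(K)})$ is Cauchy in $X$ with a limit $G$ satisfying $\|G\|_X\leq c\|F\|_{L_1}$.

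The main obstacle, and really the only point that is not pure bookkeeping, is identifying $G$ with $TF$ in the countable case, since the lemma's hypothesis does not explicitly endow $T$ with any continuity. I would dispose of this by using the specific form of the operators to which the lemma is applied, namely $\IA_\alpha$ and $(\IA_\alpha)^*$: once $F$ is $\F_N$-measurable, each of these reduces to a finite sum of terms $b_n^\alpha(E_n-E_{n-1})$, and since conditional expectations are $L_1$-continuous while each $b_n^\alpha$ is $\F_n$-measurable, one checks that $TF^{(K)}\to TF$ at least in $L_1$, which forces $G=TF$. With the identification in hand, the bound $\|TF\|_X\leq c\|F\|_{L_1}$ is immediate from the Cauchy construction.
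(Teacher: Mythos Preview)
Your argument is essentially the paper's own: decompose an $\F_N$-measurable $F$ into atomic pieces and apply the triangle inequality in $X$. The paper writes this as a one-line computation and does not distinguish the finite from the countably infinite case (it simply asserts $TF=\sum_k Tg_k$ and sums).

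Your added care about the countable case is reasonable, and your remark that finitely supported simple functions are all one actually needs is correct and is the cleanest way out. There is, however, a slip in your identification step: it is \emph{not} true that $(\IA_\alpha)^*$ reduces to a finite sum when applied to an $\F_N$-measurable function. The terms $(E_n-E_{n-1})M_nG$ for $n>N$ do not vanish in general (indeed, the paper emphasizes that $(\IA_\alpha)^*$ fails to map simple functions to simple functions and handles this tail via the $\J^2$ estimate). So your $L_1$-continuity argument, as written, does not cover $(\IA_\alpha)^*$. Since you already observed that the finitely supported case suffices for the applications, you can simply drop the countable paragraph rather than repair it.
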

\begin{proof}
Let~$F$ be an arbitrary simple function. 
Without loss of generality, let~$F$ be~$\F_N$-measurable. Then, we may write
\eq{
F = \sum\limits_{k=1}^{\infty} a_k \chi_{w_k},
}
where~$\A(\F_N) = \{w_k\mid k \in \mathbb{N}\}$ (possibly, there is only a finite number of atoms in~$\F_N$). Let~$g_k := a_k \chi_{w_k}$, note that the~$g_k$ are atomic. It remains to use the triangle inequality
\eq{
\|TF\|_X = \Big\|\sum \limits_{k=1}^{\infty} Tg_k\Big\|_X  \leq \sum\limits_{k=1}^{\infty} \|Tg_k\|_X \leq c\sum \limits_{k=1}^{\infty} \|g_k\|_{L_1} = c\|F\|_{L_1}.
}
\end{proof}
\section{Two weak type inequalities}\label{S3}
\begin{proof}[Proof of Lemma~\ref{FirstWeakType}.]
According to Lemma~\ref{DeltaMeasure} and the fact that Lorentz spaces are normable, it suffices to verify~\eqref{WeakOne} for atomic functions~$F$ only. Consider a sequence of atoms
\eq{
\Omega = w_0 \supset w_1 \supset \ldots \supset w_N,\quad w_n \in \A(\F_n),
} 
and an atomic function $F = a \chi_{w_N}$. Let~$P(w_n) = r_n$, for~$n = 0, \ldots, N$. Without loss of generality, we may assume
\eq{
F_n = \frac{1}{r_n} \chi_{w_n},\quad n = 0,1,\ldots, N,
}
i.e. we set~$a = r_N^{-1}$. Now let us write the action of~$\IA_\alpha$ explicitly:
\eq{\label{action} 
\IA_\alpha[F](x) = \sum \limits_{k=1}^{n} r_k^\alpha \left( \frac{1}{r_k} - \frac{1}{r_{k-1}}\right) - b_{n+1}^\alpha(x) \frac{1}{r_n},\qquad x \in w_n \setminus w_{n+1},\quad n = 1,\ldots, N-1.
}
In the case~$x\in w_N$, the formula is slightly simpler:
\eq{
\IA_\alpha[F](x) = \sum \limits_{k=1}^{N} r_k^\alpha \left( \frac{1}{r_k} - \frac{1}{r_{k-1}}\right).
}
We start with the estimate
\eq{
r_k^\alpha \left( \frac{1}{r_k} - \frac{1}{r_{k-1}}\right) = r_k^\alpha \int \limits_{r_k}^{r_{k-1}} \frac{1}{x^2}dx \leq \int \limits_{r_k}^{r_{k-1}} x^{\alpha-2}dx.
}
Using formula \eqref{action} and the estimate~$b_{n+1}(x) \leq r_n$ for~$x\in w_{n}\setminus w_{n+1}$, we obtain 
\eq{\label{PointwiseEstimate}
\Big|\IA_\alpha[F](x)\Big| \leq  \int \limits_{r_{n}}^1 x^{\alpha - 2} dx + r_n^{\alpha-1}= \frac{1 - r_{n} ^{\alpha-1}}{\alpha - 1} +r_n^{\alpha-1} \lesssim r_{n}^{-\frac{1}{q}}
}
for~$x\in w_n\setminus w_{n+1}$. For~$x\in w_N$, we also have~$|\IA_\alpha[F]| \lesssim r_N^{-\frac{1}{q}}$.

Recall that~\eqref{WeakOne} means that for every~$\lambda > 0$
\eq{\label{WeakTypeExplained}
P\big(|\IA_\alpha [F]| > \lambda\big) \lesssim \lambda^{-q}
}
since we have~$\|F\|_{L_1} = 1$. It follows from~\eqref{PointwiseEstimate} that if~$|\IA_\alpha [F](x)| > \lambda$, then~$x\in w_n$ with~$\lambda \lesssim r_n^{-\frac{1}{q}}$, which immediately leads to~\eqref{WeakTypeExplained}.
\end{proof}

\begin{proof}[Proof of Lemma~\ref{SecondWeakType}.]
It suffices to verify~\eqref{WeakTwoDual} for an atomic function~$G$. Let~$G = r_N^{-1}\chi_{w_N}$ as in the proof of the previous lemma. We split the operator $(\IA_\alpha)^*$ given by formula~\eqref{star}, into two parts:
\alg{\label{J1}\J^1 = \sum_{n=1}^{N} (E_n-E_{n-1})M_n,\\
\label{J2}\J^2 = \sum_{n>N} (E_n-E_{n-1})M_n.
}
It suffices to prove the inequalities
\alg{\label{J1bound}\sup \limits_{\lambda > 0} \lambda \Big(P\big(|\J^1[G]| > \lambda\big)\Big)^\frac{1}{q} \lesssim 1;\\
\label{J2bound}\sup \limits_{\lambda > 0} \lambda \Big(P\big(|\J^2[G]| > \lambda\big)\Big)^\frac{1}{q} \lesssim 1.
}

We begin with the estimate for the operator $\J^1$. Note that for~$n \leq N$, the equality $M_nG = r_n^\alpha G$ holds true, which helps to rewrite~$\J^1[G]$ in the following way (we use the same notation as in the proof of the previous lemma:~$w_n$ is the atom of~$\F_n$ containing~$w_N$,~$n \leq N$, we also postulate~$r_{N+1} = 0$ and~$w_{N+1} = \varnothing$):
\eq{
\J^1[G]  = \sum \limits_{n=1}^N r_n^\alpha(G_n - G_{n-1}) =-r_1^\alpha G_0 +  \sum \limits_{n=1}^{N} G_n(r_n^\alpha - r_{n+1}^\alpha) = -r_1^\alpha G_0 + \sum \limits_{n=1}^N \frac{r_n^\alpha - r_{n+1}^\alpha}{r_n} \chi_{w_n}.
}
In particular,
\eq{
\J^1[G]|_{w_{n} \setminus w_{n+1}} =-r_1^\alpha  + \sum \limits_{k=1}^{n} \frac{r_k^\alpha - r_{k+1}^\alpha}{r_k}
}
for any~$n \leq N$. Similar to the proof of Lemma~\ref{FirstWeakType}, we use the inequality
\eq{
\frac{r_k^\alpha - r_{k+1}^\alpha}{r_k} = \frac{1}{r_k} \int \limits_{r_{k+1}}^{r_k} \alpha x^{\alpha-1} dx \leq \alpha \int \limits_{r_{k+1}}^{r_k} x^{\alpha-2} dx
}
to obtain the pointwise bound
\eq{
|\J^1[G](x)| \leq 1+ \alpha \sum\limits_{k=1}^{n-1} \int \limits_{r_{k+1}}^{r_k} x^{\alpha-2} dx + r_n^{\alpha-1}\lesssim r_{n}^{-\frac{1}{q}},\qquad x\in w_{n}\setminus w_{n+1}.
}
This means that the inequality~$|\J^1[G](x)| > \lambda$ does indeed hold only within a set of size $O(\lambda^{-q})$, and we have proved~\eqref{J1bound}.

Now consider the operator $\J^2$. In fact, we will show the inequality
\eq{\label{Uniform}
\|\J^2[G]\|_{L_\infty}\lesssim r_N^{\alpha-1}
}
in this case. Since the function $\J^2[G]$ vanishes outside~$w_N$, this implies~\eqref{J2bound}. 

Consider a point $x$ lying within the atoms~$w_N \supset w_{N+1} \supset \ldots$ whose probabilities are~$P_N = r_N, P_{N+1}, \ldots$ respectively; we assume~$w_n \in \A(\F_n)$ as usually. Then, for all $n > N$, and~$x\in w_n$,
\alg{E_nM_nG(x) = M_nG(x) = \frac{P_n^{\alpha}}{r_N},\\
\frac{P_n^{\alpha+1}}{r_NP_{n-1}} \leq E_{n-1}M_nG(x) \leq \frac{P_{n-1}^\alpha}{r_N}.
}
Therefore,
\eq{
r_N^{-1} \sum\limits_{n > N} (P_n^\alpha - P_{n-1}^\alpha)\leq  \J^2[G](x) \leq r_N^{-1}\sum \limits_{n > N} P_n^\alpha\big(1 - \frac{P_n}{P_{n-1}}\big).
}
Using the inequality
\eq{
P_n^\alpha\big(1 - \frac{P_n}{P_{n-1}}\big) = P_n^{\alpha+1} \int \limits_{P_n}^{P_{n-1}} \frac{1}{x^2}dx \leq \int \limits_{P_n}^{P_{n-1}} x^{\alpha - 1} dx,
}
we obtain the bound
\eq{\label{Above}
\J^2[G](x) \leq r_N^{-1}\int \limits_0^{P_N} x^{\alpha - 1} dx = \frac{P_N^\alpha}{r_N\alpha} = \frac{r_N^{\alpha-1}}{\alpha}.
}

The bound from below is even simpler:
\eq{\label{Below}
\J^2[G](x) \geq r_N^{-1}\sum \limits_{n > N} (P_n^\alpha - P_{n-1}^\alpha) = -r_N^{-1}P_N^\alpha = -r_N^{\alpha-1}.
}
Since both estimates~\eqref{Above} and~\eqref{Below} hold true for any~$x \in w_N$, we have verified~\eqref{Uniform}.
\end{proof}

\bibliography{mybib}{}

\providecommand{\bysame}{\leavevmode\hbox to3em{\hrulefill}\thinspace}
\providecommand{\MR}{\relax\ifhmode\unskip\space\fi MR }
\providecommand{\MRhref}[2]{%
  \href{http://www.ams.org/mathscinet-getitem?mr=#1}{#2}
}
\providecommand{\href}[2]{#2}
\begin{thebibliography}{10}

\bibitem{ApplebaumBanuelos2014}
D.~Applebaum and R.~Banuelos, \emph{Probabilistic approach to fractional
  integrals and the {H}ardy--{L}ittlewood--{S}obolev inequality}, Springer
  Proceedings in Mathematics \& Statistics \textbf{116} (2014), 17--40.

\bibitem{ASW2018}
R.~Ayoush, D.~Stolyarov, and M.~Wojciechowski, \emph{Martingale approach to
  {S}obolev embedding theorems}, to appear in Revista Matematica
  Iberoamericana, https://arxiv.org/abs/1811.08137.

\bibitem{BanuelosOsekowski2017}
R.~Banuelos and A.~Osekowski, \emph{Sharp weak type inequalities for fractional
  integral operators}, Potential analysis \textbf{47} (2017), 101--121.

\bibitem{BourgainBrezis2002}
J.~Bourgain and H.~Brezis, \emph{On the equation $\mathrm{div} {Y} = f$ and
  application to control of phases}, Journ. Amer. Math. Soc. \textbf{16}
  (2002), no.~2, 393--426.

\bibitem{ChaoOmbe1985}
J.-A. Chao and H.~Ombe, \emph{Commutators on dyadic martingales}, Proceedings
  of the Japan Academy A \textbf{61} (1985), no.~2, 35--38.

\bibitem{Cruz-UribeMoen2013}
D.~Cruz-Uribe and K.~Moen, \emph{A fractional {M}uckenhoupt–{W}heeden theorem
  and its consequences}, Integral Equations and Operator Theory \textbf{76}
  (2013), 421--446.

\bibitem{Cwikel1975}
M.~Cwikel, \emph{The dual of weak ${L}^p$}, Annales de l’institut {F}ourier
  \textbf{25} (1975), no.~2, 81--126.

\bibitem{Grafakos2008Classical}
L.~Grafakos, \emph{Classical {F}ourier analysis}, Springer, 2008.

\bibitem{NakaiSadasue2012}
E.~Nakai and G.~Sadasue, \emph{Martingale {M}orrey--{C}ampanato spaces and
  fractional integrals}, Journal of Function Spaces and Applications (2012),
  Article ID 673929.

\bibitem{Osekowski2014}
A.~Osekowski, \emph{Sharp weak type inequality for fractional integral
  operators associated with $d$-dimensional {W}alsh--{F}ourier series}, Integr.
  Equ. Oper. Theory \textbf{78} (2014), 589--600.

\bibitem{Peetre1976}
J.~Peetre, \emph{New thoughts on {B}esov spaces}, Mathematical Department of
  Duke University, 1976.

\bibitem{VanSchaftingen2014}
J.~Van Schaftingen, \emph{Limiting {B}ourgain--{B}rezis estimates for systems:
  theme and variations}, Journal of Fixed Point Theory and Applications
  \textbf{15} (2014), no.~2, 273--297.

\bibitem{Sobolev1938}
S.~Soboleff, \emph{Sur un th\'eor\`eme d'analyse fonctionnelle}, Mat. Sbornik
  \textbf{4(46)} (1938), no.~3, 471--497, (in Russian); translated in Amer.
  Math. Soc. Transl., 1963, {\bf 2}(34), 39--68.

\bibitem{Stein1970}
E.~M. Stein, \emph{Singular integrals and differentiability properties of
  functions}, Princeton University Press, 1970.

\bibitem{Stolyarov2019}
D.~Stolyarov, \emph{Martingale interpretation of weakly cancelling differential
  operators}, Zapiski {N}auchn. {S}em. {P}{O}{M}{I} \textbf{480} (2019),
  191--198, (in Russian); to be translated in J. Math. Sci. (N.Y.);
  https://arxiv.org/abs/1909.00148.

\bibitem{Watari1964}
C.~Watari, \emph{Multipliers for {W}alsh--{F}ourier series}, Tohoku Math. J.
  \textbf{16} (1964), no.~3, 239--251.

\end{thebibliography}
\bibliographystyle{amsplain}

%
%
%
%

St. Petersburg State University, Department of Mathematics and Computer Science;

d.m.stolyarov at spbu dot ru,

jarcev.v.2017 at list dot ru.

\end{document}